\documentclass[reqno,12pt]{article}
\usepackage{amsmath,amssymb,amsthm,mathtools}
\usepackage{mathrsfs}
\usepackage{bm}
\usepackage{amsmath}
\usepackage{tikz-cd}
\usepackage{booktabs}
\usepackage{xcolor}
\usepackage{hyperref}
\usepackage{float}
\usepackage{orcidlink}
\usepackage{comment}
\hypersetup{
    colorlinks=true,
    linkcolor=blue,
    citecolor=blue,
    urlcolor=blue}
\usepackage[margin= 0.8in]{geometry}
\newtheorem{theorem}{Theorem}[section]
\newtheorem{lemma}[theorem]{Lemma}
\newtheorem{proposition}[theorem]{Proposition}
\newtheorem{corollary}[theorem]{Corollary}

\theoremstyle{definition}

\allowdisplaybreaks

\newcommand{\Z}{\mathbb{Z}}

\begin{document}
\title{On the infinite sum of reciprocals of the fourth powers of balancing numbers}
\author{
 Subhasis Panda\\
\small Department of Mathematics, School of Advanced Sciences\\
\small VIT-AP University, Amravati-522241, Andhra Pradesh, India\\
\small {subhasispanda559@gmail.com}\\\\
Aditya Kumar Dash\\ 
\small Department of Mathematics and Applied Statistics, School of Applied Sciences \\
\small KIIT Deemed to be University, Bhubaneswar-751024, India\\
\small {adityadash44@gmail.com}\\\\
Utkal Keshari Dutta\\ 
\small Department of Mathematics and Applied Statistics, School of Applied Sciences \\
\small KIIT Deemed to be University, Bhubaneswar-751024, India\\
\small {utkal.duttafma@kiit.ac.in}
}
\date{}
\maketitle

\begin{abstract} 
In this note, we study the infinite reciprocal sum $\sum_{k=n}^{\infty}1/B_k^4$ involving the fourth powers of balancing numbers $B_n$. We show that, for every $n\geq2$,
\begin{equation*}
	\left\lfloor
	\left(
	\sum_{k=n}^{\infty}\frac{1}{B_k^4}
	\right)^{-1}
	\right\rfloor
	=
	B_n^4-B_{n-1}^4
	-\left\lceil\frac{B_{2n-1}}{280}\right\rceil
	+\varepsilon_n,
\end{equation*}
where $\varepsilon_n=1$ if $n\equiv1\pmod{12}$ and $\varepsilon_n=0$ otherwise. This result extends the corresponding reciprocal-sum result for Fibonacci numbers due to Hwang, Park and Song to balancing numbers. 
\end{abstract}

\noindent\textbf{\small{\bf AMS 2020 Subject Classification}}: 11B37, 11B39, 11B50, 11D09\\

\noindent \textbf{\small{\bf Keywords}}: Balancing numbers, Lucas-balancing numbers, reciprocal sums.

\section{Introduction}
A natural number $n$ is called a balancing number if it is a solution of the Diophantine equation
$
1+2+\dots + (n-1) = (n+1) + (n+2) + \dots +(n+l),
$
where $l$ is called the balancer corresponding to $n$ \cite{BGKP}.
Let $\{B_{n}\}_{n \geq 0}$ denote the balancing sequence, defined recursively by $B_0=0$, $B_1=1$, and
 $B_{n} = 6B_{n-1}-B_{n-2}$ for $n \geq 2$. 
The positive square roots of $8B_n^2+1$ generate Lucas-balancing sequence $\{C_n\}_{n \geq 0}$. 
 The Lucas-balancing sequence satisfies the same recurrence as the balancing sequence, with different initial values, namely, $C_{n} = 6C_{n-1}-C_{n-2}$ for $n \geq 2$ where $C_{0}=1$ and $C_{1}=3$ \cite{GKP}. 
The closed-form expressions for these sequences are given by
\begin{equation*}
	B_n=\frac{\lambda^n-\mu^n}{\lambda-\mu},
	\qquad
	C_n=\frac{\lambda^n+\mu^n}{2},
\end{equation*}
where $\lambda=3+2\sqrt{2}$ and $\mu=3-2\sqrt{2}$ are the roots of the characteristic equation $x^2-6x+1=0$.

Many researchers have studied the partial sums of reciprocal Fibonacci and other related numbers. Ohtsuka and Nakamura \cite{HOSN} studied the partial sums of reciprocal Fibonacci numbers and obtained the following results, where $\lfloor\cdot\rfloor$ denotes the floor function.
For all $n \geq 2$, we have
\begin{equation*}
	\left\lfloor
	\left(\sum_{k=n}^{\infty}\frac{1}{F_k}\right)^{-1}
	\right\rfloor
	=
	\begin{cases}
		F_{n-2}, & \text{if $n$ is even},\\
		F_{n-2}-1, & \text{if $n$ is odd}.
	\end{cases}
\end{equation*}
and for all $n\geq1$,
\begin{equation*}
	\left\lfloor
	\left(\sum_{k=n}^{\infty}\frac{1}{F_k^2}\right)^{-1}
	\right\rfloor
	=
	\begin{cases}
		F_nF_{n-1}-1, & \text{if $n$ is even},\\
		F_nF_{n-1}, & \text{if $n$ is odd}.
	\end{cases}
\end{equation*}
Later, Holiday and Komatsu \cite{HTK} established several identities for generalized Fibonacci numbers $G_n$, defined by $G_n=a \,G_{n-1}+G_{n-2},\, n\geq2$, with initial values $(G_0,G_1)=(0,1)$.

 Zhang and Wang \cite{Zhang2011, Zhang2012} considered infinite sums involving reciprocals of Pell and squared Pell numbers and obtained several identities for these sums.  Xu and Wang \cite{Xu2013} evaluated the integer part of the infinite sums of reciprocal cubes of Pell numbers. Komatsu and Panda \cite{Komatsu2017} derived several identities involving infinite sums of reciprocals of balancing and Lucas-balancing numbers. Some of their results are as follows:
 \begin{equation*}
 	\left\lfloor
 	\left(
 	\sum_{k=n}^{\infty}\frac{1}{B_{lk}}
 	\right)^{-1}
 	\right\rfloor
 	=
 	B_{ln}-B_{l(n-1)}-1,\qquad n\geq1,
 \end{equation*}
 \quad  \quad  \quad  \quad  \quad  \quad  \quad  \quad  \quad  \quad  \quad  \quad  \quad  \quad  \quad  \quad and
 \begin{equation*}
 	\left\lfloor
 	\left(
 	\sum_{k=n}^{\infty}\frac{(-1)^k}{B_k}
 	\right)^{-1}
 	\right\rfloor
 	=
 	\begin{cases}
 		B_n+B_{n-1}, & \text{if $n$ is even},\\
 		B_n+B_{n-1}+1, & \text{if $n$ is odd}.
 	\end{cases}
 \end{equation*}
Here, $l$ is a fixed positive integer. Recently, Hwang et al. \cite{HPS} considered infinite sums involving reciprocals of the fourth powers of Fibonacci numbers and proved the following result.
\begin{equation*}
\left\lfloor
\left(
\sum_{k=n}^{\infty}\frac{1}{F_k^4}
\right)^{-1}
\right\rfloor
=
F_n^4-F_{n-1}^4-\frac{2{(-1)}^n}{5}F_{2n-1}-\left\{ \frac{n+2}{5} \right\}.
\end{equation*}

In the present study, we consider the infinite sums of reciprocals of fourth power of balancing numbers and derive the following result
\noindent
\begin{theorem}{\normalfont (Main theorem)}\label{A}
	\normalfont
	For every integer $n\geq 2$, define
	\begin{equation*}
		g_n:=B_n^4-B_{n-1}^4
		-\left\lceil\frac{B_{2n-1}}{280}\right\rceil
		+\varepsilon_n,
	\end{equation*}
	where
	\begin{equation*}
		\varepsilon_n=
	\begin{cases}
		1, & \text{if } B_{2n-1}\equiv 1 \pmod{280},\\
		0, & \text{otherwise}.
	\end{cases}	
	\end{equation*}
	Then
	\begin{equation*}
		\left\lfloor
		\left(
		\sum_{k=n}^{\infty}\frac{1}{B_k^4}
		\right)^{-1}
		\right\rfloor
		=g_n.
	\end{equation*}
\end{theorem}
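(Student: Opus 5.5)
The plan is to trap $S_n:=\sum_{k\ge n}B_k^{-4}$ between two telescoping sums, $\frac{1}{g_n+1}<S_n\le \frac{1}{g_n}$. For this I would introduce a real-valued comparison sequence
\[
h_k(t):=B_k^4-B_{k-1}^4-\frac{B_{2k-1}}{280}+t ,
\]
with $t$ a real constant. I would then look for two values $t_-<t_+$ such that for every $k\ge 2$
\[
\frac{1}{h_k(t_+)}-\frac{1}{h_{k+1}(t_+)}<\frac{1}{B_k^4}<\frac{1}{h_k(t_-)}-\frac{1}{h_{k+1}(t_-)} .
\]
Since $h_k(t)\to\infty$, summing from $k=n$ to $\infty$ would give $h_n(t_-)<S_n^{-1}<h_n(t_+)$. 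The constants should be $t_\pm=c\pm\eta$, where $c$ is the exact constant term in the asymptotic expansion of $S_n^{-1}$ and $\eta$ is a small margin.

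\textbf{Step 1: reduce to exponential polynomials.} I would use $B_k^2=(C_{2k}-1)/16$ and $C_{2k}^2=(C_{4k}+1)/2$ to write
\[
B_k^4=\frac{C_{4k}+1}{512}-\frac{C_{2k}}{128}+\frac1{256}.
\]
Together with $C_{2k}-C_{2k-2}=16B_{2k-1}$, this lets me write $B_k^4$, $h_k(t)$ and $h_{k+1}(t)$ as Laurent polynomials in $X=\lambda^{2k}$, using $\mu=\lambda^{-1}$.

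\textbf{Step 2: find the constant and the signs.} The inequality is equivalent to a sign condition on
\[
D_k(t):=h_k(t)h_{k+1}(t)-B_k^4\bigl(h_{k+1}(t)-h_k(t)\bigr).
\]
Expanded in powers of $X$, the top terms cancel identically; this cancellation is what forces the coefficient $1/280$. The next surviving coefficient is affine in $t$, and I define $c$ as the value of $t$ that kills it. For $t=c+\eta$ and $t=c-\eta$ this coefficient then has opposite signs and dominates the remaining lower-order terms. I would verify this explicitly for all $k\ge 2$, checking $k=2,3$ by hand if the tail estimate only becomes effective later.

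\textbf{Step 3: take integer parts.} Write $B_{2n-1}=280q+r$ with $0\le r<280$. Then $S_n^{-1}$ lies strictly within $\eta$ of $B_n^4-B_{n-1}^4-q-\frac{r}{280}+c$. The sequence $B_m \bmod 280$ is periodic, so only finitely many residues $r$ occur for odd indices $m=2n-1$. For each of them I would check that the interval $\bigl(-\frac{r}{280}+c-\eta,\,-\frac{r}{280}+c+\eta\bigr)$ contains no integer. Its floor should be $-1$, which yields $-\lceil B_{2n-1}/280\rceil$, except in the case $r=1$. There the fractional shift sits on the other side of an integer, and this produces $\varepsilon_n=1$. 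I would also confirm that $r=1$ occurs exactly when $n\equiv 1\pmod{12}$, as the abstract claims.

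\textbf{Main obstacle.} The hardest part is Step 2. I need to pin down $c$ exactly and choose $\eta$ small enough that no integer falls in the window for any residue, in particular near the borderline residue $r=1$. At the same time $\eta$ must be large enough that the sign of $D_k$ is uniform down to $k=2$. I would first compute $D_k$ symbolically, read off $c$, and then take $\eta$ just below the minimal distance from the points $c-\frac{r}{280}$ to the nearest integer, checking small $k$ numerically.
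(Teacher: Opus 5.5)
Your proposal is correct, and it takes a genuinely different route from the paper. The paper never isolates the limiting constant. It telescopes directly with the integer candidate, proving $g_ng_{n+1}<(g_{n+1}-g_n)B_n^4<(g_n+1)(g_{n+1}+1)$. Since $\delta_n=g_n-g_n^{\mathrm{sm}}$ varies with the residue of $B_{2n-1}$ modulo $280$ (the period-$12$ table gives $-\frac78\le\delta_n\le\frac1{280}$), both differences are bilinear in $(\delta_n,\delta_{n+1})$. The paper therefore checks the four corners of that square for each inequality. Each corner reduces, via $y^2=8x^2+1$, to a polynomial sign check in $x=B_{2n-1}\ge 35$.

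You instead telescope with the constant-shift sequence $g_k^{\mathrm{sm}}+t$. The analytic part is then a one-parameter problem that never sees the residues, and the arithmetic enters only in the final rounding. This also explains the correction term. Carrying out your Step 2 gives $c=\frac{11001\sqrt2}{2665600}\approx 0.00584$, so $280c\approx 1.63$, and among nonzero residues only $r=1$ lies below $280c$. With $\eta=10^{-3}$ one has $1<280(c\pm\eta)<2$, so you do not even need the residue list: every $r\neq 1$ (even $r=0$) gives the floor without correction.

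The price is the narrow window for $\eta$ that you anticipated. $D_k(c)$ is positive, of size about $0.9\,\lambda^{2k-1}$; this is the trace of the $O(\lambda^{-2n})$ correction to $S_n^{-1}$. Consequently the positive root of $D_2(t)$ is only about $0.00572$, and the lower sandwich needs $\eta$ larger than roughly $1.2\cdot 10^{-4}$. Rounding at $r=1$ needs $\eta<c-\frac1{280}\approx 2.3\cdot 10^{-3}$, so $\eta=10^{-3}$ works.

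The paper fights the same margin implicitly. At its tightest corner $\delta_n=\frac1{280}$, the leading coefficient of its $D(x,y)$ is about $2.77\,x^2\approx 1225\,\bigl(c-\frac1{280}\bigr)x^2$.
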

Note that the condition $B_{2n-1}\equiv1\pmod{280}$
holds precisely when  $n\equiv1\pmod{12}$.  

The structure of the paper is as follows. In section~2, we present the identities and auxiliary results on balancing and Lucas-balancing numbers needed for the proof, including the periodicity of $B_{2n-1} \bmod 280$. In section~3, we establish the sandwiching inequality. In section~4, we use this inequality to prove Theorem~\ref{A}.

\section{Balancing and Lucas Balancing Numbers}
In this section, we discuss several identities and auxiliary results regarding balancing and Lucas-balancing numbers that will be used in the proof of the main theorem.
Throughout the paper, let $\lambda=3+2\sqrt{2}$ 
\text{and} 
$\mu=3-2\sqrt{2},$
so that $ \lambda\mu=1,\lambda+\mu=6,
\lambda-\mu=4\sqrt{2}$.\\

The following result can be found in \cite[Theorem~2.1]{GKP}.
\begin{lemma}\label{L1}
\normalfont For all integers $m \ge r \ge 0$,
\begin{equation*}
B_{m+r}B_{m-r}=B_m^2-B_r^2.
\end{equation*}
\end{lemma}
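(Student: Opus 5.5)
We prove Lemma~\ref{L1} directly from the Binet form $B_n=(\lambda^n-\mu^n)/(\lambda-\mu)$ together with $\lambda\mu=1$. The plan is to multiply out both sides over the common denominator $(\lambda-\mu)^2$ and compare numerators. No induction is needed, since the identity is purely algebraic in $\lambda^m,\mu^m,\lambda^r,\mu^r$.

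\textbf{Left-hand side.} Expanding gives
\begin{equation*}
(\lambda-\mu)^2B_{m+r}B_{m-r}=(\lambda^{m+r}-\mu^{m+r})(\lambda^{m-r}-\mu^{m-r})=\lambda^{2m}+\mu^{2m}-\lambda^{m+r}\mu^{m-r}-\lambda^{m-r}\mu^{m+r}.
\end{equation*}
Since $\lambda\mu=1$, we have $\lambda^{m+r}\mu^{m-r}=(\lambda\mu)^{m-r}\lambda^{2r}=\lambda^{2r}$, and similarly $\lambda^{m-r}\mu^{m+r}=\mu^{2r}$. Hence the numerator is $\lambda^{2m}+\mu^{2m}-\lambda^{2r}-\mu^{2r}$. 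Here $m\ge r$ guarantees that all exponents are nonnegative, although $\lambda\mu=1$ would make this harmless anyway.

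\textbf{Right-hand side.} We have
\begin{equation*}
(\lambda-\mu)^2(B_m^2-B_r^2)=(\lambda^{2m}-2(\lambda\mu)^m+\mu^{2m})-(\lambda^{2r}-2(\lambda\mu)^r+\mu^{2r})=\lambda^{2m}+\mu^{2m}-\lambda^{2r}-\mu^{2r},
\end{equation*}
because $(\lambda\mu)^m=(\lambda\mu)^r=1$ and so the cross terms $-2+2$ cancel. This matches the left-hand side, and dividing by $(\lambda-\mu)^2=32\neq0$ proves the lemma.

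The only point needing care is the reduction of the mixed products using $\lambda\mu=1$; everything else is routine expansion.
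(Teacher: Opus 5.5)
Your proof is correct. The paper does not prove this lemma itself but cites it from Theorem~2.1 of \cite{GKP}; your Binet-formula computation, using $\lambda\mu=1$ to collapse the mixed products $\lambda^{m\pm r}\mu^{m\mp r}$ to $\lambda^{\pm 2r}$-type terms and the cross terms of the squares to constants, is the standard self-contained verification, in the same style the paper uses for Lemma~\ref{L3}.
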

The next identity follows immediately from Lemma~\ref{L1} by setting $m=n+1$, $r=n$, and using $B_1=1$.

\begin{lemma}\label{L2}
\normalfont For all integers $n\ge1$,
\begin{equation*}
B_{n+1}^2-B_n^2=B_{2n+1}.
\end{equation*}
\end{lemma}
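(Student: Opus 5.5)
The statement immediately above is Lemma~\ref{L2}, and the plan is to obtain it as a direct specialization of Lemma~\ref{L1}. I will take $m=n+1$ and $r=n$. For $n\ge1$ these satisfy $m\ge r\ge0$, so Lemma~\ref{L1} applies and gives
\begin{equation*}
B_{(n+1)+n}\,B_{(n+1)-n}=B_{n+1}^2-B_n^2 .
\end{equation*}
The left-hand side is $B_{2n+1}B_1$. Since $B_1=1$ by the initial conditions, it equals $B_{2n+1}$, which is exactly the claimed identity.

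There is essentially no obstacle here; the only point to check is that the index constraint $m\ge r\ge 0$ of Lemma~\ref{L1} holds.

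As an independent sanity check, I would also verify the identity from the Binet forms. Using $\lambda\mu=1$ and $(\lambda-\mu)^2=32$, one gets
\begin{equation*}
32\,(B_{n+1}^2-B_n^2)=\lambda^{2n+2}+\mu^{2n+2}-\lambda^{2n}-\mu^{2n},
\end{equation*}
because the cross terms $-2(\lambda\mu)^{n+1}$ and $+2(\lambda\mu)^{n}$ cancel. The right-hand side factors as $(\lambda^{2}-1)\lambda^{2n}-(1-\mu^{2})\mu^{2n}$. From $\lambda^2=6\lambda-1$ we get $\lambda^2-1=6\lambda-2=2\lambda(3-\mu)=2\lambda(\lambda-3)$, and $\lambda-3=2\sqrt2$, so $\lambda^2-1=4\sqrt2\,\lambda=(\lambda-\mu)\lambda$. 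By the symmetric computation, $1-\mu^2=(\lambda-\mu)\mu$. Hence the expression equals $(\lambda-\mu)(\lambda^{2n+1}-\mu^{2n+1})=32\,B_{2n+1}$, confirming the identity.
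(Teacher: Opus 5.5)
Your proposal is correct and is exactly the paper's argument: specialize Lemma~\ref{L1} with $m=n+1$, $r=n$ (so $m\ge r\ge 0$) and use $B_1=1$. Your Binet-form check is also correct, but it is not needed.
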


\begin{lemma}\label{L3}
\normalfont For all integers $n\ge1$,
\begin{equation*}
B_n^2+B_{n-1}^2=\frac{3C_{2n-1}-1}{8}.
\end{equation*}
\end{lemma}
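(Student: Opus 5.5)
We will prove Lemma~\ref{L3} by working directly with the Binet forms recalled in the introduction, using $\lambda\mu=1$ and $(\lambda-\mu)^2=32$. This identity is the balancing analogue of $F_n^2+F_{n-1}^2=F_{2n-1}$, but with a Lucas-type term on the right, so the Binet route seems the cleanest.

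First we expand both squares. From $B_n=(\lambda^n-\mu^n)/(\lambda-\mu)$ we get
\begin{equation*}
32B_n^2=\lambda^{2n}+\mu^{2n}-2(\lambda\mu)^n=2C_{2n}-2,
\end{equation*}
since $\lambda^{2n}+\mu^{2n}=2C_{2n}$. In the same way, $32B_{n-1}^2=2C_{2n-2}-2$. Adding these gives
\begin{equation*}
B_n^2+B_{n-1}^2=\frac{C_{2n}+C_{2n-2}-2}{16}.
\end{equation*}

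Next we reduce $C_{2n}+C_{2n-2}$ to a multiple of $C_{2n-1}$. The recurrence with index $2n$ gives $C_{2n}=6C_{2n-1}-C_{2n-2}$, and this only holds for $2n\ge2$, which is exactly why we need $n\ge1$. It follows that $C_{2n}+C_{2n-2}=6C_{2n-1}$. Substituting this in, we obtain
\begin{equation*}
B_n^2+B_{n-1}^2=\frac{6C_{2n-1}-2}{16}=\frac{3C_{2n-1}-1}{8},
\end{equation*}
as claimed.

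There is no real obstacle here; the only things to check are the constant $(\lambda-\mu)^2=32$ and the sign of the cross term. As a sanity check, take $n=1$. The left side is $B_1^2+B_0^2=1$, and the right side is $(3\cdot 3-1)/8=1$, so the two agree.
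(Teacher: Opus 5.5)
Your proof is correct and takes essentially the paper's approach: both expand the squares via the Binet formulas, using $(\lambda-\mu)^2=32$ and $\lambda\mu=1$. The only cosmetic difference is that you first write $B_n^2=(C_{2n}-1)/16$ and then use the recurrence $C_{2n}+C_{2n-2}=6C_{2n-1}$, whereas the paper applies the equivalent root identity $\lambda^2+1=6\lambda$ (and $\mu^2+1=6\mu$) directly.
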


\begin{proof}
Using the Binet formulas, we have
\begin{equation*}
\begin{aligned}
B_n^2+B_{n-1}^2
&=
\frac{\lambda^{2n}+\mu^{2n}-2
+\lambda^{2n-2}+\mu^{2n-2}-2}
{(\lambda-\mu)^2}  \\
&=
\frac{\lambda^{2n-2}(\lambda^2+1)
+\mu^{2n-2}(\mu^2+1)-4}
{(\lambda-\mu)^2}.
\end{aligned}
\end{equation*}
Since
$\lambda^2+1=\lambda(\lambda+\mu)=6\lambda$, and $\mu^2+1=\mu(\lambda+\mu)=6\mu,$ it follows that
\begin{equation*}
\begin{aligned}
B_n^2+B_{n-1}^2
&=
\frac{6(\lambda^{2n-1}+\mu^{2n-1})-4}
{(\lambda-\mu)^2} \\
&=
\frac{12C_{2n-1}-4}{32}
=\frac{3C_{2n-1}-1}{8}.
\end{aligned}
\end{equation*}
\end{proof}

\begin{corollary}\label{C1}
\normalfont For all integers $n\ge1$, we have
\begin{equation*}
B_n^2=\frac{8B_{2n-1}+3C_{2n-1}-1}{16} \,\, \text{and} \,\, B_{n-1}^2=\frac{3C_{2n-1}-8B_{2n-1}-1}{16}.
\end{equation*}
and consequently,
\begin{equation*}
B_n^4-B_{n-1}^4
=\frac{B_{2n-1}\bigl(3C_{2n-1}-1\bigr)}{8}.
\end{equation*}
\end{corollary}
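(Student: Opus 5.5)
We prove Corollary~\ref{C1} directly from Lemma~\ref{L2} and Lemma~\ref{L3}, treating them as a linear system in the two unknowns $B_n^2$ and $B_{n-1}^2$. For $n\ge 2$, Lemma~\ref{L2} applied with $n-1$ in place of $n$ gives
\begin{equation*}
B_n^2-B_{n-1}^2=B_{2n-1}.
\end{equation*}
Lemma~\ref{L3} gives
\begin{equation*}
B_n^2+B_{n-1}^2=\frac{3C_{2n-1}-1}{8}.
\end{equation*}
For $n=1$ the difference identity still holds trivially, since $B_1^2-B_0^2=1=B_1$. Alternatively, it follows from Lemma~\ref{L1} with $m=n$, $r=n-1$, which gives $B_{2n-1}B_1=B_n^2-B_{n-1}^2$. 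So both identities are available for all $n\ge1$.

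Next we solve the system. Adding the two identities and dividing by $2$ gives
\begin{equation*}
B_n^2=\frac{1}{2}\left(B_{2n-1}+\frac{3C_{2n-1}-1}{8}\right)=\frac{8B_{2n-1}+3C_{2n-1}-1}{16}.
\end{equation*}
Subtracting the first identity from the second and dividing by $2$ gives
\begin{equation*}
B_{n-1}^2=\frac{3C_{2n-1}-8B_{2n-1}-1}{16}.
\end{equation*}

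For the fourth-power identity, we factor the difference of squares:
\begin{equation*}
B_n^4-B_{n-1}^4=(B_n^2-B_{n-1}^2)(B_n^2+B_{n-1}^2).
\end{equation*}
Substituting the two identities above turns the right-hand side into $B_{2n-1}(3C_{2n-1}-1)/8$, as claimed. The argument is routine; the only point needing care is the index shift in Lemma~\ref{L2} and the boundary case $n=1$, which the check above handles.
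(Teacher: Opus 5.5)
Your proposal is correct and follows the paper's proof essentially verbatim: add and subtract the identities of Lemma~\ref{L2} (shifted to index $n-1$) and Lemma~\ref{L3}, then factor $B_n^4-B_{n-1}^4$ as a difference of squares. Your explicit handling of the boundary case $n=1$ (via $B_1^2-B_0^2=B_1$, or Lemma~\ref{L1} with $m=n$, $r=n-1$) is a point of care that the paper leaves implicit.
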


\begin{proof}
The first two identities follow immediately by adding and subtracting the identities in Lemma~\ref{L2} and \ref{L3}. Furthermore 
\begin{equation*}
\begin{aligned}
B_n^4-B_{n-1}^4
&=(B_n^2-B_{n-1}^2)(B_n^2+B_{n-1}^2)\\
&=\frac{B_{2n-1}(3C_{2n-1}-1)}{8}.
\end{aligned}
\end{equation*}

\end{proof}

The balancing and Lucas-balancing numbers satisfy the addition formulas $B_{p+q}=B_pC_q+C_pB_q$ and
$C_{p+q}=C_pC_q+8B_pB_q$ \cite[Theorem~2.5]{GKP}. In particular, $C_{p+1}=3C_p+8B_p$. Now, taking $p=2n-1$ and $q=2$ in the addition formulas, and using $B_2=6$ and $C_2=17$, we obtain
\begin{equation}\label{eq1}
	B_{2n+1}=17B_{2n-1}+6C_{2n-1} \,\, \text{and} \,\, C_{2n+1}=48B_{2n-1}+17C_{2n-1}.
\end{equation}

\begin{lemma}\label{L5}
\normalfont For all integers $n\ge0$,  $C_n^2-8B_n^2=1$. In particular, $C_{2n-1}^2-8B_{2n-1}^2=1$.
\end{lemma}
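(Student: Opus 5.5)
The identity $C_n^2-8B_n^2=1$ can be read off directly from the Binet formulas in the introduction, using only $\lambda\mu=1$ and $(\lambda-\mu)^2=32$.

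First write
\begin{equation*}
C_n^2=\frac{(\lambda^n+\mu^n)^2}{4}=\frac{\lambda^{2n}+\mu^{2n}+2(\lambda\mu)^n}{4}=\frac{\lambda^{2n}+\mu^{2n}+2}{4}.
\end{equation*}

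Next, since $(\lambda-\mu)^2=32$,
\begin{equation*}
8B_n^2=\frac{8(\lambda^n-\mu^n)^2}{32}=\frac{\lambda^{2n}+\mu^{2n}-2(\lambda\mu)^n}{4}=\frac{\lambda^{2n}+\mu^{2n}-2}{4}.
\end{equation*}

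Subtracting, the terms $\lambda^{2n}+\mu^{2n}$ cancel and we are left with $C_n^2-8B_n^2=\tfrac{4}{4}=1$. This holds for every integer $n\ge 0$, including $n=0$, where it reads $C_0^2-8B_0^2=1-0=1$.

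The particular case is a direct substitution. For $n\ge1$ the index $m=2n-1$ is a nonnegative integer, so putting $m$ in place of $n$ gives $C_{2n-1}^2-8B_{2n-1}^2=1$.

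This agrees with the introduction's description of $C_n$ as the positive square root of $8B_n^2+1$. No step presents any real obstacle.
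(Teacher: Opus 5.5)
Your proof is correct. From $\lambda\mu=1$ and $(\lambda-\mu)^2=32$ you get $C_n^2=(\lambda^{2n}+\mu^{2n}+2)/4$ and $8B_n^2=(\lambda^{2n}+\mu^{2n}-2)/4$. Their difference is $1$, and specializing to the index $2n-1\ge 1$ is immediate.

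The paper takes a different, shorter route. It says the identity follows from the \emph{definition} of $C_n$ as the positive square root of $8B_n^2+1$, and defers to the literature. That one-liner tacitly relies on an imported fact: this square-root description agrees with the recurrence $C_n=6C_{n-1}-C_{n-2}$, $C_0=1$, $C_1=3$, and with the Binet form $C_n=(\lambda^n+\mu^n)/2$. The Binet form is what the paper actually uses elsewhere, e.g.\ in Lemma~\ref{L3} and the addition formulas.

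Your argument starts from the Binet formulas, so it is self-contained and in effect supplies that compatibility check. Combined with $C_n=(\lambda^n+\mu^n)/2>0$, it shows that the Binet $C_n$ \emph{is} the positive square root of $8B_n^2+1$. The paper's version is briefer; yours does not depend on the reference.
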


\begin{proof}
This follows directly from the definition of $C_n$ (see \cite{GKP}).
\end{proof}
The following lemma gives the periodicity of the odd-indexed sequence $B_{2n-1}$ modulo $280$, which is needed to determine the correction term in Theorem 1.1.

\begin{lemma}\label{L6}
\normalfont The sequence $(B_{2n-1}\bmod 280)_{n\ge1}$ is periodic with period $12$. Moreover, $B_{2n-1}\not\equiv0\pmod{280}$ for every integer $n\ge1$, and $B_{2n-1}\equiv1\pmod{280}$
if and only if $n\equiv1\pmod{12}$.
\end{lemma}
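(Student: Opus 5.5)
Set $u_n := B_{2n-1}$. By the recurrence, $u_{n+1} = 34u_n - u_{n-1}$, since $\lambda^2+\mu^2 = 34$. This follows from \eqref{eq1} after eliminating $C_{2n-1}$, or directly from the Binet formula because $\lambda^2$ and $\mu^2$ are the roots of $x^2-34x+1$. Hence the pair $(u_n \bmod 280,\, u_{n+1}\bmod 280)$ evolves by the invertible linear map $(a,b)\mapsto(b,\,34b-a)$ modulo $280$. Because this map is invertible, the sequence is purely periodic, and its period is the least $p$ with $(u_{p+1},u_{p+2}) \equiv (u_1,u_2) = (1,35) \pmod{280}$.

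To make the computation manageable, split $280 = 8\cdot 5\cdot 7$ and use the Chinese remainder theorem.

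Modulo $8$: we have $34\equiv 2$, and $u_n = 1, 35, 1189,\dots$. A direct check gives $u_n \equiv 1,3,5,7,1,\dots$, so the period is $4$. Note also that $u_n$ is always odd.

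Modulo $5$: $34\equiv -1$, so $u_{n+1}\equiv -u_n-u_{n-1}$. Starting from $u_1=1$, $u_2\equiv 0$, we get $u_3\equiv 4$, $u_4\equiv 1$, $u_5\equiv 0$. The period is $3$, with values $1,0,4$.

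Modulo $7$: $34\equiv -1$ again. Starting from $u_1=1$, $u_2\equiv 0$, we get $u_3\equiv 6$, $u_4\equiv 1$, $u_5\equiv 0$. The period is $3$, with values $1,0,6$.

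Combining these, the period modulo $280$ is $\operatorname{lcm}(4,3,3)=12$. It is exactly $12$, not a proper divisor, because the mod-$8$ component alone already has exact period $4$ and the mod-$5$ component has exact period $3$.

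Both remaining claims now follow from the componentwise description.

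First, $u_n\equiv 0\pmod{280}$ would require $u_n\equiv 0 \pmod 8$. This is impossible, since $u_n$ is always odd. (It is equally ruled out by the values $1,3,5,7$ modulo $8$.)

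Second, $u_n\equiv 1 \pmod{280}$ holds if and only if $u_n\equiv 1$ modulo each of $8$, $5$ and $7$:
\begin{itemize}
\item modulo $8$, the value $1$ occurs exactly when $n\equiv 1\pmod 4$;
\item modulo $5$ and modulo $7$, the value $1$ occurs exactly when $n\equiv 1\pmod 3$, because the three residues in each cycle are distinct.
\end{itemize}
Together these give $u_n\equiv 1 \pmod{280}$ if and only if $n\equiv 1\pmod{12}$.

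The main obstacle is only the bookkeeping. We must confirm exactness of the period and verify that within each cycle the residue $1$ appears only at the indicated positions. The modulo-$8$ computation is the most error-prone and should be double-checked: $35\equiv 3$, $1189\equiv 5$, and $34\cdot 1189 - 35 = 40391 \equiv 7 \pmod 8$.
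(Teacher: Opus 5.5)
Your proof is correct. It shares the paper's basic strategy of splitting $280=8\cdot5\cdot7$ and using the Chinese remainder theorem, but the execution differs.

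The paper works with the full sequence $B_n$. It finds periods $8,6,3$ modulo $8,5,7$, concludes $B_{n+24}\equiv B_n \pmod{280}$, and so gets $12$ as a period of $B_{2n-1}$. It then settles exactness of the period, non-vanishing, and the location of the residue $1$ by explicitly listing the twelve residues $1,35,69,\dots,279$ modulo $280$.

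You instead pass to the subsequence recurrence $u_{n+1}=34u_n-u_{n-1}$ and obtain exact periods $4,3,3$ for $u_n$ modulo $8,5,7$. You then deduce all three claims componentwise:
\begin{itemize}
\item exactness, because any period modulo $280$ must be divisible by both $4$ and $3$;
\item non-vanishing, because $u_n$ is always odd;
\item the criterion $u_n\equiv1\pmod{280}\iff n\equiv1\pmod{12}$, from the distinctness of the residues within each short cycle.
\end{itemize}

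Your route avoids all arithmetic modulo $280$. It also justifies pure periodicity through invertibility of the step map $(a,b)\mapsto(b,34b-a)$, which the paper takes for granted.

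The paper's route has one advantage: its explicit residue table is exactly what gets reused in Proposition~\ref{P2}. There it lists the possible values of $\delta_n$ and derives $-\tfrac78\le\delta_n\le\tfrac1{280}$. Your componentwise description does not display those residues directly, though they can be recovered from it by the Chinese remainder theorem.
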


\begin{proof}
Since $280=2^3\cdot5\cdot7$, it is enough to determine the periods of $(B_n)_{n\in \Z_{\geq 0}}$ modulo $8$, $5$, and $7$. 
The balancing numbers satisfy the recurrence $ B_{n+2}=6B_{n+1}-B_n$, with $B_0=0$, $B_1=1$. One can easily verify that the periods of $(B_n)_{n\in \Z_{\geq 0}}$ modulo $8$, $5$, and $7$ are $8$, $6$, and $3$, respectively. 
Hence, by the Chinese Remainder Theorem, we have $B_{n+24}\equiv B_n\pmod{280}$. 
Now, if we replace $n$ by $2n-1$ in the congruence, then we obtain $B_{2n-1+24} = B_{2(n+12)-1} \equiv B_{2n-1} \pmod{280}$. Hence $12$ is a period of the sequence $(B_{2n-1}\bmod 280)_{n\ge1}$. A direct computation shows that the first twelve terms of $(B_{2n-1}\bmod 280)$ are
\begin{equation*}
	1,\;35,\;69,\;71,\;105,\;139,\;141,\;175,\;209,\;211,\;245,\;279,
\end{equation*}
which are pairwise distinct. Since the first twelve residues are pairwise distinct, none of the proper divisors of $12$ can be a period. Hence, the least period is $12$.

%Therefore, none of the proper divisors of $12$
%can be a period. Hence, the least period of $(B_{2n-1}\bmod280)_{n\ge1}$ is $12$.
\end{proof}

Now, we first give a simple estimate for the growth of the balancing numbers. Using this estimate, we then obtain a bound for the tail of the reciprocal sum, which will be used later to show that the sum converges.

\begin{lemma}\label{L7}
\normalfont The balancing numbers satisfy $B_k>5B_{k-1}$, for every integer $k\ge2$.
\end{lemma}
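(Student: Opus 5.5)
I will prove $B_k>5B_{k-1}$ for $k\ge2$ by induction on $k$, using only the recurrence $B_k=6B_{k-1}-B_{k-2}$. For the base case $k=2$ we have $B_2=6>5=5B_1$.

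For the inductive step, assume $B_{k-1}>5B_{k-2}$ for some $k\ge3$. Since $B_0=0$ and the sequence is increasing (which the hypothesis itself confirms for all indices considered), $B_{k-2}\ge0$. Hence $B_{k-2}<\tfrac15 B_{k-1}\le B_{k-1}$. Then
\begin{equation*}
B_k=6B_{k-1}-B_{k-2}>6B_{k-1}-B_{k-1}=5B_{k-1}.
\end{equation*}
The last inequality uses only $B_{k-2}<B_{k-1}$, which follows from the inductive hypothesis together with $B_{k-2}\ge 0$. This completes the induction.

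Alternatively, one can check $B_{k-2}\ge0$ directly: the same induction shows all $B_k$ are positive for $k\ge1$, because $B_k>5B_{k-1}>0$ once $B_{k-1}>0$. There is no real obstacle here. The only point needing care is positivity of $B_{k-2}$, and this also covers $k=2$ where $B_0=0$. A Binet-based argument ($B_k/B_{k-1}\to\lambda\approx5.83$) would also work, but the induction is cleaner.
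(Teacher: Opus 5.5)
Your proof is correct and is essentially the paper's proof: induction on the recurrence $B_k=6B_{k-1}-B_{k-2}$. The only cosmetic difference is that you weaken the hypothesis to $B_{k-2}<B_{k-1}$ and obtain exactly $5B_{k-1}$, whereas the paper keeps the sharper bound $B_{k-2}<\tfrac15B_{k-1}$ (in your indexing) to reach $\tfrac{29}{5}B_{k-1}>5B_{k-1}$. Both versions tacitly need the terms to be positive. You handle this correctly by carrying $B_j>0$ through a strong induction; the single hypothesis $B_{k-1}>5B_{k-2}$ would not give positivity on its own, so that is the right way to phrase it.
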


\begin{proof}
We prove the result by induction on $k$. For $k=2$, we have $B_2 =6 > 5 = 5 \cdot B_1 $. Now assume that
$B_k>5B_{k-1}$ for some $k\ge2$. Then
\begin{equation*}
B_{k+1}=6B_k-B_{k-1}
>6B_k-\frac{1}{5}B_k
=\frac{29}{5}B_k
>5B_k.
\end{equation*}
Hence $B_{k+1}>5B_k$.
\end{proof}

\begin{corollary}\label{C2}
\normalfont For all integers $n\ge1$ and $m\ge0$,
\begin{equation}
T(n+m):=\sum_{k=n+m}^{\infty}\frac{1}{B_k^4}
<
\frac{625}{624}\cdot\frac{1}{B_{n+m}^4}.
\end{equation}
\end{corollary}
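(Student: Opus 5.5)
The plan is to bound the tail geometrically using Lemma~\ref{L7}. For every $j\ge 0$, iterating $B_k>5B_{k-1}$ gives $B_{n+m+j}>5^{j}B_{n+m}$, and hence
\begin{equation*}
\frac{1}{B_{n+m+j}^4}<\frac{1}{5^{4j}}\cdot\frac{1}{B_{n+m}^4}\qquad (j\ge1),
\end{equation*}
while for $j=0$ the two sides are equal. Lemma~\ref{L7} is stated for $k\ge2$, so this iteration is valid for every $k\ge n+m+1\ge 2$. The hypotheses $n\ge1$ and $m\ge0$ ensure $n+m\ge1$, so $B_{n+m}\ge1$ is nonzero and all the terms make sense.

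Next, sum over $j\ge0$. Since every term with $j\ge1$ satisfies a strict inequality, the comparison with the geometric series is strict:
\begin{equation*}
T(n+m)=\sum_{j=0}^{\infty}\frac{1}{B_{n+m+j}^4}<\frac{1}{B_{n+m}^4}\sum_{j=0}^{\infty}\left(\frac{1}{625}\right)^{j}=\frac{625}{624}\cdot\frac{1}{B_{n+m}^4}.
\end{equation*}
The comparison also shows that the series defining $T(n+m)$ converges, because it is dominated termwise by a convergent geometric series.

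There is no real obstacle in this argument. The only point needing care is that the inequality must be strict. Strictness holds because at least one term (in fact every term with $j\ge1$) is strictly dominated, and the series converges, so strict termwise domination carries over to the sums.
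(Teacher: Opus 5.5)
Your proposal is correct and follows essentially the same route as the paper: you iterate Lemma~\ref{L7} to get $B_{n+m+j}>5^{j}B_{n+m}$ for $j\ge1$, then compare the tail termwise with the geometric series $\sum_{j\ge0}625^{-j}=625/624$, with strictness coming from the $j\ge1$ terms. One small wording slip is that the strict inequality $B_{n+m+j}>5^{j}B_{n+m}$ holds only for $j\ge1$, not ``for every $j\ge0$'', but you correct this yourself in the very next line.
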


\begin{proof}
From Lemma \ref{L7}, we obtain $B_{n+m+j}>5B_{n+m+j-1}$ for every $j\ge1$. Now, by the repeated application of this inequality, we have
$B_{n+m+j}>5^{\,j}\,B_{n+m}$, for all $j \ge 1$. Hence, 
\begin{equation*}
\frac{ 1}{B_{n+m+j}^4} < \frac{1}{\,5^{4j}\,B_{n+m}^4}, \,\, \text{for} \,\, j\ge1 .
\end{equation*}
Therefore, we have the following tail bound;
\begin{equation*}
\begin{aligned} T(n+m) &=\frac{1}{B_{n+m}^4} +\sum_{j=1}^{\infty}\frac{1}{B_{n+m+j}^4} \\ 
& < \frac{1}{B_{n+m}^4} +\frac{1}{B_{n+m}^4}\sum_{j=1}^{\infty}\frac{1}{5^{4j}} \\ 
&= \frac{1}{B_{n+m}^4} \sum_{j=0}^{\infty}\frac{1}{5^{4j}} = \frac{625}{624}\cdot\frac{1}{B_{n+m}^4}. \end{aligned}
\end{equation*}
\end{proof}
\section{Key Inequalities}
In this section, we define an auxiliary quantity $g_n^{\mathrm{sm}}$ that approximates the reciprocal sum and establish a sandwiching inequality for it.  For $n\geq2$, define

\begin{equation*}
g_n^{\mathrm{sm}}
:=B_n^4-B_{n-1}^4-\frac{B_{2n-1}}{280}.
\end{equation*}
By Corollary~\ref{C1}, this can be written as
\begin{equation*}
g_n^{\mathrm{sm}}
=\frac{3B_{2n-1}\left(35C_{2n-1}-12\right)}{280}.
\end{equation*}

\begin{proposition} \label{P1}
\normalfont	For every integer $n\ge2$,
	\begin{equation*}
		g_n^{\mathrm{sm}}\,g_{n+1}^{\mathrm{sm}}
	<
	\left(g_{n+1}^{\mathrm{sm}}-g_n^{\mathrm{sm}}\right)B_n^4
	<
	\left(g_n^{\mathrm{sm}}+1\right)\left(g_{n+1}^{\mathrm{sm}}+1\right).
	\end{equation*}
\end{proposition}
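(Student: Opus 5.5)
Our plan is to express every quantity through the single pair $x:=B_{2n-1}$, $y:=C_{2n-1}$, which satisfies $y^2-8x^2=1$ by Lemma~\ref{L5}. Then both inequalities of Proposition~\ref{P1} become polynomial inequalities in $x,y$. By \eqref{eq1} we have $B_{2n+1}=17x+6y$ and $C_{2n+1}=48x+17y$. This gives
\begin{equation*}
g_n^{\mathrm{sm}}=\frac{3x(35y-12)}{280},\qquad
g_{n+1}^{\mathrm{sm}}=\frac{3(17x+6y)\bigl(35(48x+17y)-12\bigr)}{280}.
\end{equation*}
By Corollary~\ref{C1},
\begin{equation*}
B_n^4=\left(\frac{8x+3y-1}{16}\right)^2 .
\end{equation*}

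We will substitute these expressions into
\begin{equation*}
D_1:=\bigl(g_{n+1}^{\mathrm{sm}}-g_n^{\mathrm{sm}}\bigr)B_n^4-g_n^{\mathrm{sm}}g_{n+1}^{\mathrm{sm}},
\qquad
D_2:=\bigl(g_n^{\mathrm{sm}}+1\bigr)\bigl(g_{n+1}^{\mathrm{sm}}+1\bigr)-\bigl(g_{n+1}^{\mathrm{sm}}-g_n^{\mathrm{sm}}\bigr)B_n^4 .
\end{equation*}
Each is a polynomial of degree $4$ in $(x,y)$. We will expand it and reduce modulo $y^2=8x^2+1$, rewriting every even power of $y$ in terms of $x^2$. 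This leaves each $D_i$ in the normal form $P_i(x)+yQ_i(x)$.

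The heuristic reason the result should hold is as follows. The constant $1/280$ in $g_n^{\mathrm{sm}}$ was chosen exactly to cancel the leading (quartic) terms of $D_1$ and $D_2$, which are of size $\lambda^{8n}$, and also the cubic terms. What survives should be of order $\lambda^{4n}$, that is, quadratic in $x,y$. We expect
\begin{equation*}
D_1\approx c_1x^2+\cdots,\qquad D_2\approx c_2x^2+\cdots ,
\end{equation*}
with both $c_1>0$ and $c_2>0$. For $D_2$ the positivity should come from the extra cross term $g_n^{\mathrm{sm}}+g_{n+1}^{\mathrm{sm}}$. That term has order $\lambda^{4n}$ and must dominate the residual error. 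Before doing the expansion by hand, we would check this cancellation numerically for $n=2$, where $x=B_3=35$ and $y=C_3=99$.

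With the normal forms in hand, we will bound the $yQ_i(x)$ part using
\begin{equation*}
2\sqrt2\,x<y<2\sqrt2\,x+\tfrac{1}{2\sqrt2\,x},
\end{equation*}
or more crudely $2.828x<y<2.829x$ for $x\ge35$. This reduces each $D_i>0$ to a one-variable polynomial inequality in $x$. That inequality will hold once $x\ge B_3=35$, because the leading surviving coefficient is positive and the lower-order terms are small relative to it. If the inequality is tight at $n=2$, we will verify $n=2$ directly from the numerical values.

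We expect the main obstacle to be the exact algebraic expansion. We must confirm that the quartic and cubic terms cancel completely after using $y^2=8x^2+1$, and then obtain the sign of the surviving lower-order coefficients. A sign error there would break the argument, so we will organize the computation via the Binet forms as a fallback. Set $u=\lambda^{2n-1}$ and $\mu^{2n-1}=u^{-1}$, expand both $D_i$ as Laurent polynomials in $u$, and read off the dominant surviving power of $u$ together with its coefficient. That coefficient must be positive.
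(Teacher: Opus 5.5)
Your plan is correct and is essentially the paper's proof. It uses the same substitution $x=B_{2n-1}$, $y=C_{2n-1}$ via Corollary~\ref{C1} and \eqref{eq1}, the same reduction modulo $y^2=8x^2+1$ to a normal form that turns out to be only quadratic, and the same bound $2\sqrt2\,x<y<2\sqrt2\,x+O(1/x)$. Your $\frac{1}{2\sqrt2\,x}$ is weaker than the paper's $\frac{1}{4\sqrt2\,x}$ but suffices; for $D_2$ the paper simply observes that every reduced coefficient is positive.

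One correction to your heuristic: the quartic terms cancel for any value of the constant, not because of $1/280$. This is because $D_1=B_{n-1}^4B_{n+1}^4-B_n^8+\frac{1}{280}\bigl(B_{2n-1}B_{n+1}^4-B_{2n+1}B_{n-1}^4\bigr)-\frac{B_{2n-1}B_{2n+1}}{280^2}$ and $B_{n-1}B_{n+1}=B_n^2-1$ by Lemma~\ref{L1}, so $1/280$ is needed only to kill the cubic terms.
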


\begin{proof}
To simplify the notation, let $x=B_{2n-1}$ and $y=C_{2n-1}$. From Corollary~\ref{C1}, we have
\begin{equation*}
	B_n^4=\left(\frac{8x+3y-1}{16}\right)^2.
\end{equation*}

 Further, from equation~(\ref{eq1}), we obtain $B_{2n+1}=17x+6y$ and $C_{2n+1}=48x+17y$. Hence, by the definition of $g_n^{\mathrm{sm}}$, 
\begin{equation*}
g_{n+1}^{\mathrm{sm}}
= \frac{3(17x+6y)\bigl(35(48x+17y)-12\bigr)}{280}.	
\end{equation*}
Now consider the function 

\begin{equation*}
	D_1(x,y):= \left(g_{n+1}^{\mathrm{sm}}-g_n^{\mathrm{sm}}\right)B_n^4
	-g_n^{\mathrm{sm}}g_{n+1}^{\mathrm{sm}}.
\end{equation*}
Since $g_{n+1}^{\mathrm{sm}}$, $g_{n}^{\mathrm{sm}}$ and $B_n^4$ are expressed in terms of $x$ and $y$, it follows that $D_1$ can be expressed in terms of $x$ and $y$. By Lemma~\ref{L5}, the pair $(B_{2n-1},C_{2n-1})$ satisfies the Pell equation $C_{2n-1}^2-8B_{2n-1}^2=1$, that is, $y^2=8x^2+1$. Now, we replace every occurrence of $y^2$, $y^3$ and $y^4$ by $y^2=8x^2+1$
$y^3=y(8x^2+1)$ and $y^4=(8x^2+1)^2$, respectively and using  SageMath~\cite{SageMath} to simplify the resulting expression, we obtain
\begin{equation*}
	D_1(x,y) =
	\frac{561051}{156800}x^2
	+\frac{99009}{78400}xy
	-\frac{1107}{448}x
	-\frac{3321}{3584}y
	+\frac{27099}{17920}.
\end{equation*}

Next, we want to obtain a lower bound for $D_1(x,y)$. To do this, we first establish suitable bounds for $y=\sqrt{8x^2+1}$. Since $8x^2<8x^2+1$, therefore $2\sqrt2\,x<\sqrt{8x^2+1}$. Further, we have
\begin{equation*}
	8x^2+1 < 8x^2+1+\frac{1}{32x^2} < \left(2\sqrt2\,x+\frac{1}{4\sqrt2\,x}\right)^2.
\end{equation*}
Since both sides are positive, taking square roots gives $\sqrt{8x^2+1} <
2\sqrt2\,x+\frac{1}{4\sqrt2\,x}$. Hence 
\begin{equation*}
	2\sqrt2\,x < y=\sqrt{8x^2+1} < 2\sqrt2\,x+\frac{1}{4\sqrt2\,x}.
\end{equation*}
Since the coefficient of $xy$ is positive in $D_1(x,y)$, we substitute the lower bound $ y>2\sqrt{2}\,x,$ which gives $ \frac{99009}{78400}xy > \frac{99009}{78400}x(2\sqrt{2}\,x)$. Similarly, since the coefficient of $y$ is negative, we substitute the upper bound $y<2\sqrt{2}\,x+\frac{1}{4\sqrt{2}\,x}$, which gives 
\begin{equation*}
 -\frac{3321}{3584} \left(2\sqrt{2}\,x+\frac{1}{4\sqrt{2}\,x}\right)  < -\frac{3321}{3584}y.
\end{equation*}

Therefore, we have
\begin{equation*}
\begin{aligned} 
	D_1(x,y) &> \frac{561051}{156800}x^2 +\frac{99009}{78400}x(2\sqrt{2}\,x) -\frac{1107}{448}x \\ &\qquad -\frac{3321}{3584} \left(2\sqrt{2}\,x+\frac{1}{4\sqrt{2}\,x}\right) +\frac{27099}{17920}.
\end{aligned}
\end{equation*}
After simplifying the right-hand side, we obtain
\begin{equation*}
	D_1(x,y)> \frac{N(x)}{5\,017\,600\,x},
\end{equation*}
where $N(x) = (12673152\sqrt{2} + 17953632)\,x^3
- (9298800\sqrt{2} + 12398400)\,x^2
+ 7587720\,x
- 581175\sqrt{2}$. The leading coefficient of $N'(x)$ is positive. Moreover, a direct computation using SageMath \cite{SageMath} shows that the discriminant of $N'(x)$ is negative. Therefore, $N'(x)>0$ for every $x\in\mathbb{R}$. Hence, $N(x)$ is strictly increasing on $[1,\infty)$. 
Since $ N(1)=2793177 \sqrt{2}+ 13142952>0 $, it follows that $ N(x)\ge N(1)>0$ for every $x\ge1$. As $x\geq 1$, we obtain 
\begin{equation*}
D_1(x,y)> \frac{N(x)}{5\,017\,600\,x}>0.
\end{equation*}
This proves the left inequality $g_n^{\mathrm{sm}}\,g_{n+1}^{\mathrm{sm}}
<
\left(g_{n+1}^{\mathrm{sm}}-g_n^{\mathrm{sm}}\right)B_n^4$.

For the right inequality, we define $F_1(x,y) = (g_n^{\mathrm{sm}}+1)(g_{n+1}^{\mathrm{sm}}+1) - \left(g_{n+1}^{\mathrm{sm}}-g_n^{\mathrm{sm}}\right)B_n^4.$ Now once again using the Pell equation $y^2=8x^2+1$, expanding this expression using SageMath~\cite{SageMath} gives
\begin{equation*}
	F_1(x,y) = \frac{95400549}{156800}x^2 +\frac{16894191}{78400}xy +\frac{351}{2240}x +\frac{2781}{17\,920}y +\frac{676261}{17920}.
\end{equation*}
Since $x=B_{2n-1}>0$ and $y=C_{2n-1}>0$, every coefficient in the above expression is positive. Therefore, $F_1(x,y)>0$, which implies
\begin{equation*}
	\left(g_{n+1}^{\mathrm{sm}}-g_n^{\mathrm{sm}}\right)B_n^4 < (g_n^{\mathrm{sm}}+1)(g_{n+1}^{\mathrm{sm}}+1).
\end{equation*}
\end{proof}
We now replace the smooth quantity $g_n^{\mathrm{sm}}$ by the integer quantity $g_n$ and show that the same inequalities remain true for $g_n$.

\begin{proposition} \label{P2}
 \normalfont For all $n\ge2$, we have
\begin{equation*}
	g_ng_{n+1}
	<
	(g_{n+1}-g_n)B_n^4
	<
	(g_n+1)(g_{n+1}+1).
\end{equation*}
\end{proposition}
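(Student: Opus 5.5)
Write $g_n=g_n^{\mathrm{sm}}-\delta_n$, where
\[
\delta_n:=\left\lceil\frac{B_{2n-1}}{280}\right\rceil-\frac{B_{2n-1}}{280}-\varepsilon_n .
\]
By Lemma~\ref{L6}, $B_{2n-1}\not\equiv 0\pmod{280}$, so the fractional part of $B_{2n-1}/280$ is $r_n/280$ with $r_n\in\{1,\dots,279\}$. Hence $\delta_n=1-r_n/280-\varepsilon_n$. If $n\not\equiv1\pmod{12}$, then $r_n\ge 35$ by the list of residues in Lemma~\ref{L6}, so $\delta_n\in[1/280,\,245/280]$. If $n\equiv1\pmod{12}$, then $r_n=1$ and $\delta_n=-1/280$. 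In all cases $|\delta_n|<1$ and $\delta_n$ depends only on $n\bmod 12$.

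Set $d=\delta_{n+1}-\delta_n$. The plan is to perturb the two inequalities of Proposition~\ref{P1}.

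For the left inequality, expand
\[
(g_{n+1}-g_n)B_n^4-g_ng_{n+1}=D_1-dB_n^4+\delta_n g_{n+1}^{\mathrm{sm}}+\delta_{n+1}g_n^{\mathrm{sm}}-\delta_n\delta_{n+1}.
\]
For the right inequality, similarly,
\[
(g_n+1)(g_{n+1}+1)-(g_{n+1}-g_n)B_n^4=F_1+dB_n^4-\delta_n(g_{n+1}^{\mathrm{sm}}+1)-\delta_{n+1}(g_n^{\mathrm{sm}}+1)+\delta_n\delta_{n+1}.
\]
Both $D_1$ and $F_1$ are of size $\asymp x^2$ with $x=B_{2n-1}$. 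The perturbation terms are also of order $x^2$, since $B_n^4\approx \lambda^2x^2/128\cdot(\ldots)$ and $g_{n+1}^{\mathrm{sm}}\approx \lambda^2 g_n^{\mathrm{sm}}$. So a crude $|\delta|<1$ bound will not suffice, and the leading coefficients must be compared exactly.

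Concretely, I would redo the symbolic reduction of Proposition~\ref{P1} with $\delta_n,\delta_{n+1}$ as parameters. Substituting $y^2=8x^2+1$, each difference becomes
\[
\alpha x^2+\beta xy+\gamma x+\eta y+\kappa,
\]
whose coefficients are polynomials in $(\delta_n,\delta_{n+1})$. Replacing $y$ by $2\sqrt2\,x+O(1/x)$, the leading term is $(\alpha+2\sqrt2\beta)x^2$.

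The key observation is that the twelve admissible pairs $(\delta_n,\delta_{n+1})$ come from consecutive residues in Lemma~\ref{L6}. Consecutive residues differ by $34$ or $2$ modulo $280$, with the wrap from $279$ to $1$. So there are only finitely many cases, and one checks $\alpha+2\sqrt2\beta>0$ in each. The main obstacle is exactly this verification. I expect it to reduce to showing that the leading coefficient is positive on the explicit range of $(\delta_n,\delta_{n+1})$, via a direct SageMath evaluation over the twelve residue pairs. I would also watch for a pair making the leading coefficient vanish; this is presumably why the $\varepsilon_n$ correction was introduced at $n\equiv1\pmod{12}$.

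Once the leading coefficient is positive, I would bound the lower-order terms as in Proposition~\ref{P1}, using $2\sqrt2x<y<2\sqrt2x+1/(4\sqrt2x)$. This gives positivity for $x\ge x_0$ with some explicit $x_0$. Finally, since $x=B_{2n-1}\ge B_3=35$ for $n\ge2$, only finitely many small $n$ remain, and I would check them by direct computation of $g_n$, $g_{n+1}$ and $B_n^4$.
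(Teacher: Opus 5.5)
Your proposal is correct and follows essentially the paper's route. You write $g_n$ as $g_n^{\mathrm{sm}}$ plus a bounded residue-dependent correction (the paper uses the opposite sign, $g_n=g_n^{\mathrm{sm}}+\delta_n$ with $-\tfrac78\le\delta_n\le\tfrac1{280}$), expand both differences around $D_1$ and $F_1$, reduce with $y^2=8x^2+1$, and prove positivity using $2\sqrt2\,x<y<2\sqrt2\,x+\frac{1}{4\sqrt2\,x}$ and $x\ge35$.

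The only tactical difference is in how the corrections are handled. You would test the twelve actual consecutive pairs $(\delta_n,\delta_{n+1})$ asymptotically and then check small $n$ by direct computation. The paper instead uses bilinearity in $(\delta_n,\delta_{n+1})$ to reduce to the four corners of $[-\tfrac78,\tfrac1{280}]^2$, and gets positivity directly for all $x\ge35$. Incidentally, your heuristic constants should read $B_n^4\approx\lambda^2x^2/32$ and $g_{n+1}^{\mathrm{sm}}\approx\lambda^4g_n^{\mathrm{sm}}$, but this does not affect the plan.
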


\begin{proof}
Let $\delta_n=g_n-g_n^{\mathrm{sm}}$. From the definitions of $g_n$ and $g_n^{\mathrm{sm}}$, we obtain
\begin{equation*}
\delta_n=-\left\lceil\frac{B_{2n-1}}{280}\right\rceil
+\frac{B_{2n-1}}{280}
+\varepsilon_n.	
\end{equation*}

 By Lemma~\ref{L6}, the sequence $(B_{2n-1}~\text{mod}~280)$ has period $12$. Therefore it is enough to determine $\delta_n $ on that period. Now, using the definition of $\delta_n$ together with the corresponding residue classes of $B_{2n-1}$ modulo $280$, we obtain
 \begin{equation*}
 \delta_n\in
 \left\{
 \frac1{280},
 -\frac78,
 -\frac{211}{280},
 -\frac{209}{280},
 -\frac58,
 -\frac{141}{280},
 -\frac{139}{280},
 -\frac38,
 -\frac{71}{280},
 -\frac{69}{280},
 -\frac18,
 -\frac1{280}
 \right\}.	
 \end{equation*}

Hence, $-\frac78\le\delta_n\le\frac1{280}$. To prove the left inequality, we define
\begin{equation*}
D(x,y)
:=
(g_{n+1}-g_n)B_n^4-g_ng_{n+1},	
\end{equation*}
where $x=B_{2n-1}$ and $y=C_{2n-1}$, as in the proof of Proposition~\ref{P1}. Now, using $g_n=g_n^{\mathrm{sm}}+\delta_n$ \text{and} $g_{n+1}=g_{n+1}^{\mathrm{sm}}+\delta_{n+1}$, we obtain
\begin{align*}
	D(x,y)
	&=
	(g_{n+1}^{\mathrm{sm}}-g_n^{\mathrm{sm}})B_n^4
	-g_n^{\mathrm{sm}}g_{n+1}^{\mathrm{sm}}  \\
	&\quad
	+\delta_{n+1}B_n^4
	-\delta_nB_n^4
	-g_n^{\mathrm{sm}}\delta_{n+1}
	-g_{n+1}^{\mathrm{sm}}\delta_n
	-\delta_n\delta_{n+1} \\
	& = D_1(x,y)
	+\delta_{n+1}\bigl(B_n^4-g_n^{\mathrm{sm}}\bigr)
	-\delta_n\bigl(B_n^4+g_{n+1}^{\mathrm{sm}}\bigr)
	-\delta_n\delta_{n+1}.
\end{align*}
where $D_1(x,y)$ denotes the expression obtained in
Proposition~\ref{P1}.

The expression $D(x,y)$ is a bilinear map in the variables $\delta_n$ and $\delta_{n+1}$. Indeed, for any fixed value of $\delta_{n+1}$, the expression $D(x,y)$ is linear in $\delta_n$, and for any fixed value of $\delta_n$, it is linear in $\delta_{n+1}$. Therefore, for any fixed $\delta_{n+1}$, the minimum of $D(x,y)$ with respect to $\delta_n$ over the interval $-\frac78\le\delta_n\le\frac1{280}$ is attained at one of the two endpoints, namely $\delta_n=-\frac78$ or $\delta_n=\frac1{280}$. The same argument applies when $\delta_n$ is fixed. In this case, $D(x,y)$ is a linear function of $\delta_{n+1}$, so its minimum on the interval $-\frac78\le\delta_{n+1}\le\frac1{280}$ is attained at one of the two endpoints. Therefore, the minimum value of $D(x,y)$ over the rectangle $-\frac78\le\delta_n,\delta_{n+1}\le\frac1{280}$ is attained at one of the four corner points
\begin{equation*}
	\left(-\frac78,-\frac78\right),\,\,
	\left(-\frac78,\frac1{280}\right),\,\,
	\left(\frac1{280},-\frac78\right),\,\,
	\left(\frac1{280},\frac1{280}\right).
\end{equation*}
Hence, it is enough to show that $D(x,y) > 0 $ for at each of the four corner points. Now again expanding the expression of $D(x,y)$ in SageMath~\cite{SageMath} gives
\begin{equation}\label{eq5}
	\begin{aligned}
		D(x,y) = {} & D_1(x,y) \\
		&  + \delta_{n+1}\left(\frac{17}{32}x^2 - \frac{3}{16}xy + \frac{37}{560}x - \frac{3}{128}y + \frac{5}{128}\right) \\
		& - \delta_n\left(\frac{19601}{32}x^2 + \frac{3465}{16}xy - \frac{1259}{560}x - \frac{3561}{4480}y + \frac{4901}{128}\right) \\
		& - \delta_n\delta_{n+1}.
	\end{aligned}
\end{equation}
Using the relation $y^2=8x^2+1$, equation~\eqref{eq5}
further simplifies to
\begin{equation}\label{eq6}
	\begin{aligned}
		D(x,y) = {} & D_1(x,y) \\
		& - \frac{19601}{32}x^2\delta_n - \frac{3465}{16}xy\,\delta_n + \frac{1259}{560}x\,\delta_n + \frac{3561}{4480}y\,\delta_n - \frac{4901}{128}\delta_n \\
		& + \frac{17}{32}x^2\delta_{n+1} - \frac{3}{16}xy\,\delta_{n+1} + \frac{37}{560}x\,\delta_{n+1} - \frac{3}{128}y\,\delta_{n+1} + \frac{5}{128}\delta_{n+1} \\
		& - \delta_n\delta_{n+1}.
	\end{aligned}	
\end{equation}
We now verify that $D(x,y)>0$ at each of the four corner
points. Recall that $x=B_{2n-1}\geq B_3=35$ and
$y=C_{2n-1}=\sqrt{8x^2+1}>0$ for all $n\geq2$. First consider $\left(\delta_n,\delta_{n+1}\right)
=
\left(-\frac78,-\frac78\right)$. Now, after substituting these values into \eqref{eq6}, we obtain 
\begin{equation*}
D(x,y)=P_1(x)+Q_1(x)y,
\end{equation*}
where
\begin{equation*}
P_1(x)
=
\frac{84527451}{156800}x^2
-\frac{10071}{2240}x
+\frac{613139}{17920} \,\, \text{and} \,\, Q_1(x)
=
\frac{14968059}{78400}x
-\frac{28701}{17920}.
\end{equation*}
For $x\geq35$, we have $Q_1(x)>0$, as $Q_1$ is an increasing linear function and $Q_1(35)>0$. Moreover,
\begin{equation*}
P_1'(x)
=
\frac{84527451}{78400}x-\frac{10071}{2240}.
\end{equation*}

$P_1'$ is an increasing linear function and $P_1'(35)>0$. Therefore, $P_1'(x)>0$ for every $x\ge35$. Hence, $P_1$ is strictly increasing on $[35,\infty)$. Also, $P_1(35)>0$, so $P_1(x)>0$ for every $x\ge35$. As $y>0$, it follows that $D(x,y)=P_1(x)+Q_1(x)y>0$. 

Next consider $\left(\delta_n,\delta_{n+1}\right)=\left(-\frac78,\frac1{280}\right)$. After substituting these values into \eqref{eq6}, we get 
\begin{equation*}
	D(x,y)=P_2(x)+Q_2(x)y,
\end{equation*}
where
\begin{equation*}
P_2(x)
=
\frac{21150159}{39200}x^2
-\frac{173967}{39200}x
+\frac{62753}{1792}, \,\, \text{and} \,\, Q_2(x)
=
\frac{1869393}{9800}x
-\frac{2907}{1792}.
\end{equation*}
Again, $Q_2$ is an increasing linear function and
$Q_2(35)>0$. Moreover,
\begin{equation*}
P_2'(x)
=
\frac{21150159}{19600}x-\frac{173967}{39200}>0
\end{equation*}
for $x\geq35$, and $P_2(35)>0$. Therefore $P_2(x)>0,\,\, Q_2(x)>0$
for all $x\geq35$. Consequently, $D(x,y)>0$. 

For the third corner, $\left(\delta_n,\delta_{n+1}\right)
=
\left(\frac1{280},-\frac78\right)$, after substituting these values into \eqref{eq6}, we obtain
\begin{equation*}
D(x,y)=P_3(x)+Q_3(x)y,
\end{equation*}
where 
\begin{equation*}
P_3(x)
=
\frac{72573}{78400}x^2
-\frac{49407}{19600}x
+\frac{6023}{4480},\,\, \text{and} \,\,	Q_3(x)
=
\frac{25617}{39200}x
-\frac{141633}{156800}.
\end{equation*}
The function $Q_3$ is increasing and $Q_3(35)>0$. Also,
\begin{equation*}
P_3'(x)
=
\frac{72573}{39200}x-\frac{49407}{19600}>0
\end{equation*}
for $x\geq35$, and $P_3(35)>0$. Thus $P_3(x)>0$, $Q_3(x)>0$ for all $x\geq35$, and hence $D(x,y)>0$. 

Finally, consider the point $\left(\delta_n,\delta_{n+1}\right)
=
\left(\frac1{280},\frac1{280}\right)$, after substituting these values into \eqref{eq6}, we obtain
\begin{equation*}
	D(x,y)=P_4(x)+Q_4(x)y,
\end{equation*}
where
\begin{equation*}
	P_4(x)
	=
	\frac{218331}{156800}x^2
	-\frac{193077}{78400}x
	+\frac{862777}{627200}, \,\, \text{and} \,\, Q_4(x)
	=
	\frac{38319}{78400}x
	-\frac{579447}{627200}.
\end{equation*}
The function $Q_4$ is increasing and $Q_4(35)>0$. Furthermore,
\begin{equation*}
P_4'(x)
=
\frac{218331}{78400}x-\frac{193077}{78400}>0
\end{equation*}
for $x\geq35$, and $P_4(35)>0$. Hence $P_4(x)>0$, $Q_4(x)>0$ for all $x\geq35$. Therefore, $D(x,y)>0$. We have thus proved that $D(x,y)>0$ at all four corner points of the rectangle $\left[-\frac78,\frac1{280}\right]^2$. Since $D(x,y)$ is bilinear in $(\delta_n,\delta_{n+1})$, its
minimum on this rectangle is attained at one of these four
corners. It follows that $D(x,y)>0$ for every admissible pair $(\delta_n,\delta_{n+1})$. Therefore, $g_ng_{n+1} < (g_{n+1}-g_n)B_n^4$,
which proves the left-hand inequality in Proposition~\ref{P2}.

Next, for the right inequality, we define 
\begin{equation*}
F(x,y)
	:=
(g_n+1)(g_{n+1}+1)
-
(g_{n+1}-g_n)B_n^4,
\end{equation*}
where $x=B_{2n-1}$ and $y=C_{2n-1}$. From the relations $g_n=g_n^{\mathrm{sm}}+\delta_n$ and  $g_{n+1}=g_{n+1}^{\mathrm{sm}}+\delta_{n+1}$, we obtain
\begin{align*}
	F(x,y)
	={}&
	(g_n^{\mathrm{sm}}+1)(g_{n+1}^{\mathrm{sm}}+1)
	-
	(g_{n+1}^{\mathrm{sm}}-g_n^{\mathrm{sm}})B_n^4
	\\
	&+\delta_n(g_{n+1}^{\mathrm{sm}}+1)
	+\delta_{n+1}(g_n^{\mathrm{sm}}+1)
	+\delta_n\delta_{n+1} -\delta_{n+1}B_n^4+\delta_nB_n^4. \\
	& = F_1(x,y) + \delta_{n+1}\left(g_n^{sm}+1-B_n^4\right) + \delta_n\left(g_{n+1}^{sm}+1+B_n^4\right) + \delta_n\delta_{n+1}
\end{align*}
As above, $F(x,y)$ is bilinear in $(\delta_n,\delta_{n+1})$.  Hence, by the same  argument, it suffices to verify that $F(x,y)>0$ at the four corners $\left(-\frac78,-\frac78\right)$, 
$\left(-\frac78,\frac1{280}\right)$,$
\left(\frac1{280},-\frac78\right)$, and $
\left(\frac1{280},\frac1{280}\right)$. 
For the first corner $(\delta_n,\delta_{n+1})=
\left(-\frac78,-\frac78\right)$,
after substituting these values into the expression for $F(x,y)$
and simplifying, we obtain
\begin{equation*}
F(x,y)=G_1(x)+H_1(x)y,
\end{equation*}
where
\begin{equation*}
G_1(x)
=\frac{11434149}{156800}x^2
+\frac{4887}{2240}x
+\frac{58861}{17920} \,\, \text{and} \,\, H_1(x)
=
\frac{2025141}{78400}x
+\frac{14877}{17920}.
\end{equation*}
Since $x,\,y>0$ and all coefficients of $G_1(x)$ and $H_1(x)$ are positive, we have  $F(x,y)=G_1(x)+H_1(x)y>0$. For the second corner
$(\delta_n,\delta_{n+1})=\left(-\frac78,\frac1{280}\right)$, after substitution and simplification gives
\begin{equation*}
F(x,y)=G_2(x)+H_2(x)y,
\end{equation*}
where
\begin{equation*}
G_2(x)
=
\frac{2840241}{39200}x^2
+\frac{83247}{39200}x
+\frac{4301}{1280}, \,\, \text{and}\,\, H_2(x)
=
\frac{254757}{9800}x
+\frac{1089}{1280}.
\end{equation*}
Again, all coefficients of $G_2(x)$ and $H_2(x)$ are positive.
Therefore, $F(x,y)>0$. For the third corner $(\delta_n,\delta_{n+1}) = \left(\frac1{280},-\frac78\right)$, after substitution and simplification
gives 
\begin{equation*}
	F(x,y)=G_3(x)+H_3(x)y,
\end{equation*}
where 
\begin{equation*}
G_3(x)
=
\frac{47908227}{78400}x^2
+\frac{4047}{19600}x
+\frac{165913}{4480}, \,\, \text{and} \,\, 	H_3(x)
=
\frac{8470983}{39200}x
+\frac{20673}{156800}.
\end{equation*}
All coefficients in these two expressions are positive. Hence
$F(x,y)>0$ for $x,y>0$. Finally, consider the fourth corner point $(\delta_n,\delta_{n+1})
= \left(\frac1{280},\frac1{280}\right)$, after simplication, we get
\begin{equation*}
F(x,y)=G_4(x)+H_4(x)y,
\end{equation*}
where
\begin{equation*}
G_4(x)
=
\frac{95743269}{156800}x^2
+\frac{11637}{78400}x
+\frac{23759303}{627200}, \,\, \text{and} \,\, H_4(x)
=
\frac{16954881}{78400}x
+\frac{95607}{627200}.
\end{equation*}
All coefficients of $G_4(x)$ and $H_4(x)$ are positive. Thus $F(x,y)>0$. We have proved that $F(x,y)>0$ at all four corners points. Since its minimum on this rectangle is attained at one of these four corners. Therefore, $F(x,y)>0$. Now, it follows from the definition of $F(x,y)$ that
\begin{equation*}
(g_n+1)(g_{n+1}+1)
-
(g_{n+1}-g_n)B_n^4>0.
\end{equation*}
Hence,
\[
(g_{n+1}-g_n)B_n^4
<
(g_n+1)(g_{n+1}+1),
\]
which proves the right-hand inequality.
\end{proof}

\section{Proof of Main Theorem}

We are now ready to prove the main result of this paper. It gives a closed formula for the integer part of the reciprocal of the infinite tail sum of the fourth power of the balancing numbers.
%\begin{theorem}
%\normalfont Let $(B_n)_{n\ge0}$ be the sequence of balancing numbers, and let $g_n$ be defined as above. Then, for every integer $n\ge2$, the reciprocal of the tail sum is
%\begin{equation*}
%	\left\lfloor
	%\left(
%	\sum_{k=n}^{\infty}\frac{1}{B_k^4}
%	\right)^{-1}
%	\right\rfloor
%	=
%	g_n.
%\end{equation*}
%\end{theorem}

\begin{proof}(Proof of Theorem \ref{A})\\
In order to prove the main theorem, it is enough to prove that
\begin{equation*}\label{eq:mainineq}
	g_n\le T(n)^{-1}<g_n+1
\end{equation*} for every integer $n\ge2$,
where
\begin{equation*}
T(n)=\sum_{k=n}^{\infty}\frac{1}{B_k^4}.
\end{equation*}
 By Proposition~\ref{P2}, for every integer $n\ge2$, we have
\begin{equation*}
g_ng_{n+1}<(g_{n+1}-g_n)B_n^{4}
<(g_n+1)(g_{n+1}+1).	
\end{equation*}
Since $g_n>0$ and $g_{n+1}>0$, dividing the left inequality by $g_ng_{n+1}\,B_n^{4}$ gives
\begin{equation*}
	\frac{1}{B_n^{4}}+\frac{1}{g_{n+1}}
	< \frac{1}{g_n},
\end{equation*}
while dividing the right inequality by $(g_n+1)(g_{n+1}+1)B_n^{4}$ yields
\begin{equation*}
	\frac{1}{g_n+1}
	<
	\frac{1}{B_n^{4}}
	+
	\frac{1}{g_{n+1}+1}.
\end{equation*}
For any integer $m\ge n$, repeated application of the first inequality gives
\begin{equation}\label{eq3}
  \sum_{k=n}^{m}\frac{1}{B_k^{4}}
  +\frac{1}{g_{m+1}} \le	\frac{1}{g_n}.
\end{equation}
Similarly, If we apply the second inequality repeatedly, then
\begin{equation}\label{eq4}
	\begin{aligned}
		\frac{1}{g_n+1} <
		\sum_{k=n}^{m}\frac{1}{B_k^{4}}
		+\frac{1}{g_{m+1}+1}.
	\end{aligned}
\end{equation}
Since
\begin{equation*}
g_n=B_n^{4}-B_{n-1}^{4}-\left\lfloor\frac{B_{2n-1}^{2}}{280}\right\rfloor+\varepsilon_n,
\end{equation*}
we have $g_n\to\infty$ as $n\to\infty$. Now, letting  $m\to\infty$ in the above equations (\ref{eq3}) and (\ref{eq4}) gives
\begin{equation*}
\frac{1}{g_n+1} < \,\sum_{k=n}^{\infty}\frac1{B_k^{4}} \, \le \frac1{g_n},
\end{equation*}
that is, $\frac{1}{g_n+1}<T(n)\le\frac1{g_n}$. Finally, taking reciprocals  gives
$g_n\le T(n)^{-1}<g_n+1$. This completes the proof of the theorem. 
\end{proof}

\section*{Declarations}
\textbf{Conflict of interest:} 
The authors declare that they do not have conflict of interests. \vspace{0.2cm}\\
\textbf{Funding:}
No funding was received.

%\section*{Acknowledgement}
%We would like to thank the editor and the anonymous referees for their valuable comments and helpful feedback.

\end{document}